\newcommand{\ack}{\section*{Acknowledgements}}
\def\di{\displaystyle}
\def\N{\mathbb{N}}
\def\Z{\mathbb{Z}}
\def\R{\mathbb{R}}
\def\Q{\mathbb{Q}}
\def\L{\mathscr{L}}
\def\di{\displaystyle}
\def\T{\mathbb{T}}
\def\ds{\displaystyle}
\newtheorem{theorem}{Theorem}[section]
\newtheorem{lemma}[theorem]{Lemma}
\begin{document}
\title[]{Diffusion equations with spatially dependent coefficients and fractal Cauer-type networks}
\author{Jacky Cresson$^1$, Anna Szafra\'{n}ska$^2$}

\begin{abstract}
We give a self-contained proof of the connection existing between diffusion equations with spatially dependent coefficients and fractal Cauer-type networks initiated by J. Sabatier in 2020 and discussed in more details in [J. Sabatier and al., Fractional behaviours modelling, Springer, 2022].   
\end{abstract}

 \maketitle

$^1$ Laboratoire de mathématiques et leurs applications, UMR CNRS 5142, Université de Pau et des Pays de l'Adour-E2S

$^2$ Institute of Applied Mathematics, Gdańsk University of Technology, G. Narutowicz Street 11/12, 80-233 Gdańsk, Poland
\tableofcontents

\section{Introduction}

Diffusion equations with spatially dependent coefficients were recently used to model power-law behaviors observed in Hydrogen storage experiments by V. Tartaglione in (\cite{tartaglione},Chap.4, 4.2.4). The type of diffusion equation used in this work is very particular and is related under a specific spatial discretization to Cauer type networks (see \cite{tartaglione},Chap.2,2.6). It was first discussed by J. Sabatier in \cite{sab2020}. These models seems to provide a good alternative to more conventional modeling using fractional models, meaning particular form of the transfer function admitting a fractional form (see \cite{tartaglione}, equation (4.10)).\\

The aim of the present work is to provide complete proofs and a self-contained presentation for relations and results presented in \cite{sab2020,tartaglione}. In particular, we are interested in a global theorem which precises the connection existing between Cauer-type networks and in particular fractal Cauer-type networks and diffusion equation with spatially dependent coefficients. Moreover, we also prove the functional relations satisfied by the transfer function of a fractal Cauer-type network allowing us to characterize the power-law type behaviors which can be expected. \\

The paper is organized as follows: In Section 2, we define the class of diffusion equations under study as well as the equations obtained under a spatial discretization. Section 3 deals with Cauer type ladder networks and the connection with discretizations of specific diffusion equations. Section 4 gives an explicit expression for the transfer function of a Cauer type ladder network and section 5 specify all the previous results for Fractal Cauer type ladder networks. Section 6 is devoted to a characterization of the fractional behavior of fractal Cauer type ladder network and Section 7 gives some perspectives of this work.

\section{Diffusion equation with variable coefficient}
 
Let $u:\R_+\times \R_+ \to \R$, $u=u(t,z)$, is a solution of the following {\bf diffusion equation} with spatially dependant coefficients: 
\begin{equation} 
\label{diff}
\partial_t u = \gamma(z) \partial_z \Big(\beta(z)\partial_z u\Big)
\end{equation}
where $\gamma:\R_+\to \R$, $\beta:\R_+\to \R$ and let $u(0,z) = u_0(z)$. 

Taking the Laplace transform of (\ref{diff})
\begin{equation}
U(s,z) = \L(u)(s),
\end{equation}
where $\L$ is the Laplace transform, we obtain
\begin{equation} 
\label{lap}
sU = \gamma(z) \partial_z\Big(\beta(z)\partial_z U\Big).
\end{equation}

In \cite{ac}, following a previous work of A. Oustaloup \cite{Oustaloup}, we have proved that discretization over a geometric space-scale of the inverse Fourier transform of the diffusion equation with constant coefficient can be interpreted as a fractal RL ladder network. \\

In the following, we give the properties satisfied by a space-discretization of the diffusion equation \eqref{lap} over a uniform space-scale.

\subsection{Space discretization of diffusion equation}

Let $h>0$, we denote by $\T_z$ the uniform space-scale defined by the set of points $z_k = kh$, $k\in \N$.\\ 

Let $\varphi(s,z)=\beta(z)\partial_z U$, then we consider the following space-discretizations $\varphi_h, U_h$ of functions $\varphi$ and $U$

\begin{equation}
\begin{array}{rcl}
\vspace{2mm}
\varphi_h (s,kh) &=& \beta(kh) \ds\frac{1}{h}\Big(U_h (s,(k+1)h) - U_h(s,kh)\Big)\\
s U_h(s,kh) &=& \gamma(kh)\ds\frac{1}{h}\Big(\varphi_h(s,kh)-\varphi_h(s,(k-1)h)\Big).
\end{array}
\end{equation}
Therefore, $(\varphi_h,U_h)$ satisfy for all $k\in \N$, $k\geq 1$, the following relations
\begin{equation}\label{dsys}
\begin{array}{rcl}
\vspace{2mm}
\varphi_h(s,kh) &=& \beta(kh)\Delta_+[U_h](s,kh)\\
s U_h(s,kh) &=& \gamma(kh)\Delta_-[\varphi_h](s,kh),
\end{array}
\end{equation}
where $\Delta_+$, $\Delta_-$ are the forward and backward discrete operators defined for all $g\in C (\T_z ,\R^d )$ by 
\begin{equation}
\Delta_+ [f](z_k)=\di\frac{f(z_{k+1})-f(z_k)}{h}\ \ \mbox{\rm and}\ \ 
\Delta_- [f](z_k)=\di\frac{f(z_k )-f(z_{k-1})}{h},
\end{equation}
respectively.\\

The space-discretization of equation \eqref{lap} over the space scale $\T_z$ is then given by 
\begin{equation}
\label{deq}
sU_h(s,kh) = \gamma(kh)\Delta_-\Big[\beta(kh)\Delta_+[U_h]\Big](s,kh),\;\;\; k\geq 1.
\end{equation}

The next section gives formal expression for the recursive relation between voltage and current in a Cauer-type ladder network. We will see that the discrete form of the diffusion equation (\ref{deq}) over the space scale $\T_z$ can be interpreted as a specific Cauer-type ladder network.

\section{Cauer type ladder networks}

\subsection{Classical relations on electronic circuits}

Let us denote by $Z$ and $Y$ the impedance and admittance, respectively. We have $Z=\ds\frac{1}{Y}$. In the following, by $S$ we denote an equivalent to one of any component of electronic circuit: resistance ($R$), inductance ($L$), capacitance ($C$). 

We introduce two kinds of the basic relations of the impedance with the current $I$ and the voltage $U$.
\begin{itemize}
\item[$\bullet$] series type relation:
\begin{figure}[!ht]

	\begin{tikzpicture}
		[-,auto,thin,
			boxR/.style={rectangle,text width=4em,fill=blue!5,draw,align=center},
			boxC/.style={rectangle,text width=1em,fill=orange!5,draw,align=center},
			mid arrow/.style={draw, postaction={decorate},
				decoration={
					markings, mark=at position 0.75 with {\arrow[scale=2]{>}}}},
			end arrow/.style={draw, postaction={decorate},
				decoration={
					markings, mark=at position 1 with {\arrow[scale=2]{>}}}}	
			]

		\node[] (empty0) {};
		\node[boxR] (S1) [right of = empty0,node distance = 2cm] {$S_1$};
		\node[boxR] (S2) [right of = S1,node distance = 2.7cm] {$S_2$};
		\node[] (empty1) [right of = S2,node distance = 2cm]{};
		
		\node[] (U0) [below of = empty0,node distance = 0.6cm] {};
		\node[] (U1) [right of = U0,node distance = 1cm] {};
		\node[] (U11) [below of = U1,node distance = 0.8cm] {};
		\node[] (U2) [right of = U1,node distance = 2cm] {};
		\node[] (U3) [right of = U2,node distance = 0.7cm] {};
		\node[] (U4) [right of = U3,node distance = 2cm] {};
		\node[] (U41) [below of = U4,node distance = 0.8cm] {};

		\node[] (pattern) [right of = empty1,node distance = 4.5cm,text width=10em]{$U=U_1+U_2=Z I$,};
		\node[] (pattern1) [below of = pattern,node distance = 1cm,text width=10em]{$Z = Z_1[S_1]+Z_2[S_2]$};

					
		\draw[mid arrow,>=stealth] (empty0) -- (S1) node[above=1mm,text width=7em] {$I$};
		\draw[mid arrow,>=stealth] (S1) -- (S2) {};		
		\draw[mid arrow,>=stealth] (S2) -- (empty1) {};	
		
		\draw[end arrow,>=stealth] (U2) -- (U1) node[below=1mm,text width = -4em]{$U_1$};
		\draw[end arrow,>=stealth] (U4) -- (U3) node[below=1mm,text width = -4em]{$U_2$};
		
		\draw[end arrow,>=stealth] (U41) -- (U11) node[below=1mm,text width = -11em]{$U$};
					
	\end{tikzpicture}

\end{figure}
\item[$\bullet$] parallel type relation:
\begin{figure}[!ht]

 \begin{tikzpicture}
		[-,auto,thin,
			boxR/.style={rectangle,text width=4em,fill=blue!5,draw,align=center},
			boxC/.style={rectangle,text width=1em,fill=orange!5,draw,align=center},
			connection/.style={inner sep=0,outer sep=0},
			mid arrow/.style={draw, postaction={decorate},
				decoration={
					markings, mark=at position 0.75 with {\arrow[scale=2]{>}}}},
			end arrow/.style={draw, postaction={decorate},
				decoration={
					markings, mark=at position 1 with {\arrow[scale=2]{>}}}}	
			]

		\node[connection] (0) [text width = 0.0001em] {};
		\node[connection] (1) [right of = 0,node distance = 1cm,text width = 0.0001em] {};
		\node[connection] (11) [above of = 1, node distance = 0.8cm] {};
		\node[connection] (12) [below of = 1, node distance = 0.8cm]{};
		
		\node[boxR] (S1) [right of = 11,node distance = 2cm] {$S_1$};
		\node[boxR] (S2) [right of = 12,node distance = 2cm] {$S_2$};
		
		\node[connection] (31) [right of = S1, node distance = 2cm]{};
		\node[connection] (32) [right of = S2, node distance = 2cm]{};
		\node[connection] (3) [below of = 31, node distance = 0.8cm]{};
		\node[connection] (4) [right of = 3, node distance = 1cm]{};
		
		\node[] (U1) [below of = 12, node distance = 0.7cm] {};
		\node[] (U2) [below of = 32, node distance = 0.7cm] {};
		
		\node[] (p) [right of = 31,node distance = 6cm] {};
		\node[] (pattern) [below of = p,node distance = 0.5cm,text width=10em]{$U=Z (I_1+I_2) = ZI$,};
		\node[] (pattern1) [below of = pattern,node distance = 1cm,text width=10em]{$\displaystyle\frac{1}{Z} = \frac{1}{Z_1[S_1]}+\frac{1}{Z_2[S_2]}$};

					
		\draw[mid arrow,>=stealth] (0) -- (1) node[above=1mm,text width=3em] {$I$};
		\draw (11) -- (12) {};		
		\draw[mid arrow,>=stealth] (11) -- (S1) node[above=1mm,text width=8em] {$I_1$};
		\draw[mid arrow,>=stealth] (12) -- (S2) node[above=1mm,text width=8em] {$I_2$};
		\draw (S1) -- (31) {};
		\draw (S2) -- (32) {};
		\draw (31) -- (32) {};
		\draw (3) -- (4) {};
			
		\draw[end arrow,>=stealth] (U2) -- (U1) node[below=1mm,text width = -11em]{$U$};
					
	\end{tikzpicture}

\end{figure}
\end{itemize}

\subsection{Cauer type RC networks}

Let us introduce the recursive synthesis of the Cauer type RC network. 

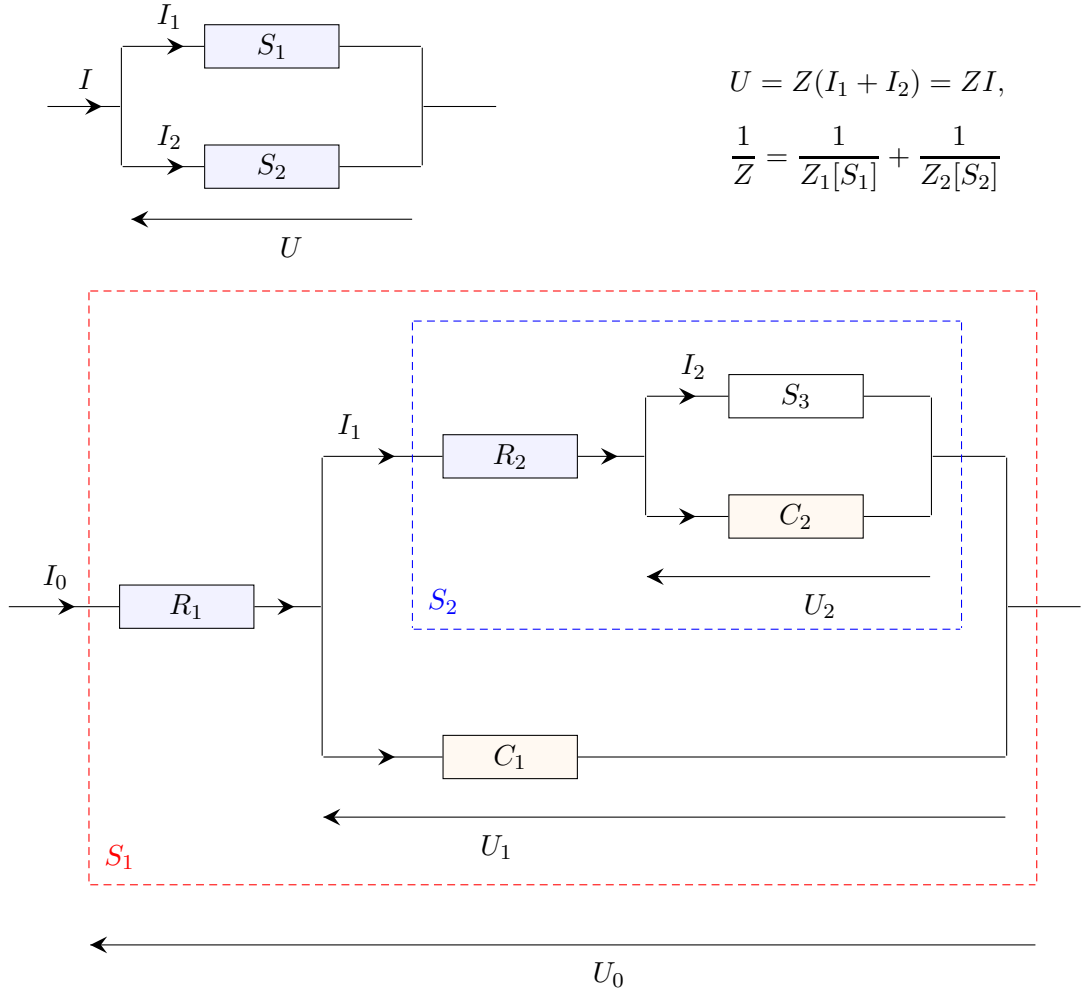
\begin{figure}[!ht]

	\begin{tikzpicture}
		[-,auto,thin,
			boxR/.style={rectangle,text width=4em,fill=blue!5,draw,align=center},
			boxC/.style={rectangle,text width=4em,fill=orange!5,draw,align=center},
			boxS/.style={rectangle,text width=4em,draw,align=center},
			connection/.style={inner sep=0,outer sep=0},
			mid arrow/.style={draw, postaction={decorate},
				decoration={
					markings, mark=at position 0.6 with {\arrow[scale=2]{>}}}},
			end arrow/.style={draw, postaction={decorate},
				decoration={
					markings, mark=at position 1 with {\arrow[scale=2]{>}}}}	
			]

		\node[] (empty0) {};
		\node[boxR] (R1) [right of = empty0,node distance = 2.5cm] {$R_1$};
		\node[connection] (1) [right of = R1,node distance = 1.8cm] {};
		\node[connection] (11) [above of = 1, node distance = 2cm] {};
		\node[connection] (12) [below of = 1, node distance = 2cm]{};
		
		\node[boxR] (R2) [right of = 11, node distance = 2.5cm]{$R_2$};
		\node[boxC] (C1) [right of = 12,node distance = 2.5cm] {$C_1$};
		
		\node[connection] (2) [right of = R2,node distance=1.8cm]{};
		\node[connection] (21) [above of = 2, node distance = 0.8cm] {};
		\node[connection] (22) [below of = 2, node distance = 0.8cm]{};
		
		\node[boxS] (S3) [right of = 21,node distance = 2cm] {$S_{3}$};
		\node[boxC] (C2) [right of = 22,node distance = 2cm] {$C_2$};
		
		\node[connection] (31) [right of = S3, node distance = 1.8cm]{};
		\node[connection] (32) [right of = C2, node distance = 1.8cm]{};
		\node[connection] (3) [below of = 31, node distance = 0.8cm]{};
		\node[connection] (4) [right of = 3, node distance = 1cm]{};
		\node[connection] (5) [below of = 4, node distance = 4cm]{};
		\node[connection] (51) [below of = 4, node distance = 2cm]{};
		\node[connection] (6) [right of = 51, node distance = 1cm]{};
		
		
		\node[connection] (S01) [above of = empty0,node distance = 4.2cm] {};
		\node[connection] (S02) [below of = empty0,node distance = 3.7cm] {};
		\node[connection] (S011) [right of = S01,node distance = 1.2cm] {};
		\node[connection] (S021) [right of = S02,node distance = 1.2cm] {};
		
		\node[connection] (S11) [above of = 6,node distance = 4.2cm] {};
		\node[connection] (S12) [below of = 6,node distance = 3.7cm] {};
		\node[connection] (S111) [left of = S11,node distance = 0.6cm] {};
		\node[connection] (S121) [left of = S12,node distance = 0.6cm] {};
		
		\node[connection] (S21) [above of = 11,node distance = 1.8cm] {};
		\node[connection] (S22) [below of = 11,node distance = 2.3cm] {};
		\node[connection] (S211) [right of = S21,node distance = 1.2cm] {};
		\node[connection] (S221) [right of = S22,node distance = 1.2cm] {};
		
		\node[connection] (S31) [above of = 4,node distance = 1.8cm] {};
		\node[connection] (S32) [below of = 4,node distance = 2.3cm] {};
		\node[connection] (S311) [left of = S31,node distance = 0.6cm] {};
		\node[connection] (S321) [left of = S32,node distance = 0.6cm] {};
		
		\node[connection] (U01) [below of = S021,node distance = 0.8cm] {};
		\node[connection] (U02) [below of = S121,node distance = 0.8cm] {};
		\node[connection] (U11) [below of = 12,node distance = 0.8cm] {};
		\node[connection] (U12) [right of = U11,node distance = 9.1cm] {};
		\node[connection] (U21) [below of = 22,node distance = 0.8cm] {};
		\node[connection] (U22) [right of = U21,node distance = 3.8cm] {};
%
%
%
			
					
		\draw[mid arrow,>=stealth] (empty0) -- (R1) node[above=1mm,text width=10em] {$I_{0}$};
		\draw[mid arrow,>=stealth] (R1) -- (1) {};		
		\draw (11) -- (12) {};		
		\draw[mid arrow,>=stealth] (11) -- (R2) node[above=1mm,text width=12em] {$I_1$};
		\draw[mid arrow,>=stealth] (12) -- (C1) node[above=1mm,text width=8em] {};
		
		\draw[mid arrow,>=stealth] (R2) -- (2) {};		
		\draw (21) -- (22) {};		
		\draw[mid arrow,>=stealth] (21) -- (S3) node[above=1mm,text width=8em] {$I_2$};
		\draw[mid arrow,>=stealth] (22) -- (C2) node[above=1mm,text width=8em] {};
		
		\draw (S3) -- (31) {};
		\draw (C2) -- (32) {};
		\draw (31) -- (32) {};
		\draw (3) -- (4) {};
		\draw (4) -- (5) {};
		\draw (5) -- (C1) {};
		\draw (51) -- (6) {};
		
		\draw[densely dashed,draw=red] (S011) -- (S021) node[above right=1mm,text=red] {$S_1$};
		\draw[densely dashed,draw=red] (S011) -- (S111);
		\draw[densely dashed,draw=red] (S111) -- (S121);
		\draw[densely dashed,draw=red] (S021) -- (S121);
		
		\draw[densely dashed,draw=blue] (S211) -- (S221) node[above right=1mm,text=blue] {$S_2$};
		\draw[densely dashed,draw=blue] (S211) -- (S311);
		\draw[densely dashed,draw=blue] (S311) -- (S321);
		\draw[densely dashed,draw=blue] (S221) -- (S321);

		\draw[end arrow,>=stealth] (U02) -- (U01) node[below=1mm,text width = -35em]{$U_0$};
		
		\draw[end arrow,>=stealth] (U12) -- (U11) node[below=1mm,text width = -11em]{$U_1$};
		
		\draw[end arrow,>=stealth] (U22) -- (U21) node[below=1mm,text width = -11em]{$U_2$};
					
	\end{tikzpicture}

    \caption{Recursive synthesis of the RC circuit with the series-parallel topology} \label{recursive}
\end{figure}
According to Figure \ref{recursive} we can observe that the RC circuit can be described by the recurrent pattern presented on Figure \ref{recurrent}.
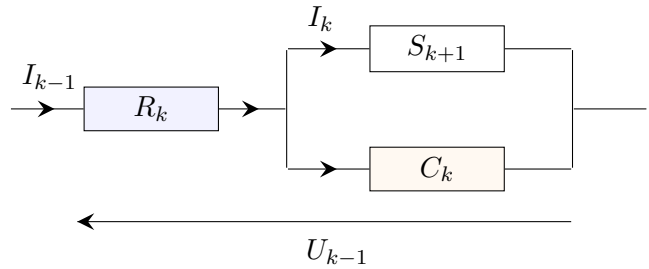
\begin{figure}[!ht]

	\begin{tikzpicture}
		[-,auto,thin,
			boxR/.style={rectangle,text width=4em,fill=blue!5,draw,align=center},
			boxC/.style={rectangle,text width=4em,fill=orange!5,draw,align=center},
			boxS/.style={rectangle,text width=4em,draw,align=center},
			connection/.style={inner sep=0,outer sep=0},
			mid arrow/.style={draw, postaction={decorate},
				decoration={
					markings, mark=at position 0.6 with {\arrow[scale=2]{>}}}},
			end arrow/.style={draw, postaction={decorate},
				decoration={
					markings, mark=at position 1 with {\arrow[scale=2]{>}}}}	
			]

		\node[] (empty0) {};
		\node[connection] (empty1) [right of = empty0, node distance = 1cm] {};
	
		\node[boxR] (Rn) [right of = empty0, node distance = 2cm]{$R_k$};
		
		\node[connection] (1) [right of = Rn,node distance=1.8cm]{};
		\node[connection] (11) [above of = 1, node distance = 0.8cm] {};
		\node[connection] (12) [below of = 1, node distance = 0.8cm]{};
		
		\node[boxS] (Sn1) [right of = 11,node distance = 2cm] {$S_{k+1}$};
		\node[boxC] (Cn) [right of = 12,node distance = 2cm] {$C_k$};
		
		\node[connection] (21) [right of = Sn1, node distance = 1.8cm]{};
		\node[connection] (22) [right of = Cn, node distance = 1.8cm]{};
		\node[connection] (3) [below of = 21, node distance = 0.8cm]{};
		\node[connection] (4) [right of = 3, node distance = 1cm]{};

		\node[connection] (Un1) [below of = empty1,node distance = 1.5cm] {};
		\node[connection] (Un2) [below of = 3,node distance = 1.5cm] {};
%
%
%
			
					
		\draw[mid arrow,>=stealth] (empty0) -- (Rn) node[above=1mm,text width=9em] {$I_{k-1}$};
		\draw[mid arrow,>=stealth] (Rn) -- (1) {};		
		\draw (11) -- (12) {};		
		\draw[mid arrow,>=stealth] (11) -- (Sn1) node[above=1mm,text width=9em] {$I_k$};
		\draw[mid arrow,>=stealth] (12) -- (Cn) node[above=1mm,text width=8em] {};
		
		\draw (Sn1) -- (21) {};
		\draw (Cn) -- (22) {};
		\draw (21) -- (22) {};
		\draw (3) -- (4) {};

		\draw[end arrow,>=stealth] (Un2) -- (Un1) node[below=1mm,text width = -16em]{$U_{k-1}$};
					
	\end{tikzpicture}

    \caption{Recurrent pattern of RC circuit} \label{recurrent}
\end{figure}

We use the classical current and voltage relationships in the components of RC electronic circuit with series-parallel topology (Figures \ref{CkR} and \ref{RkR}).  
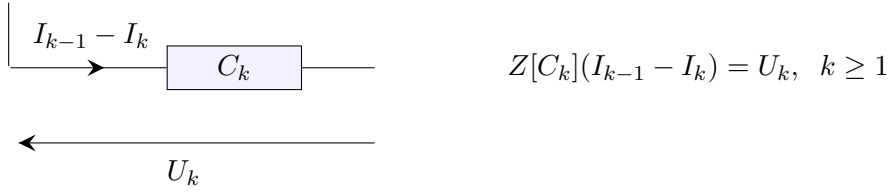
\begin{figure}[!ht]

	\begin{tikzpicture}
	[-,auto,thin,
	boxR/.style={rectangle,text width=4em,fill=blue!5,draw,align=center},
	boxC/.style={rectangle,text width=1em,fill=orange!5,draw,align=center},
	mid arrow/.style={draw, postaction={decorate},
		decoration={
			markings, mark=at position 0.6 with {\arrow[scale=2]{>}}}},
	end arrow/.style={draw, postaction={decorate},
		decoration={
			markings, mark=at position 1 with {\arrow[scale=2]{>}}}}	
	]
	
	\node[inner sep=0pt] (empty0) {};
	\node[] (empty00) [above of = empty0, node distance = 1cm] {};
	\node[boxR] (Ck) [right of = empty0,node distance = 3cm] {$C_k$};
	\node[] (empty1) [right of = Ck,node distance = 2cm]{};
	
	\node[] (U0) [below of = empty0,node distance = 1cm] {};
	\node[] (U1) [right of = U0,node distance = 5cm] {};

	\node[] (pattern) [right of = empty1,node distance = 4.5cm,text width=15em]{$Z[C_k](I_{k-1}-I_k)=U_k,\;\; k\geq 1$};

	
	\draw[mid arrow,>=stealth] (empty0) -- (Ck) node[above=1mm,text width=14em] {$I_{k-1}-I_k$};		
	\draw[] (Ck) -- (empty1) {};	

	\draw[end arrow,>=stealth] (U1) -- (U0) node[below=1mm,text width = -11em]{$U_k$};
	
	\draw[] (empty00) -- (empty0);
	
	\end{tikzpicture}
	
    \caption{Current-voltage relationship in capacitor}
    \label{CkR}
\end{figure}
\begin{figure}[!ht]

	\begin{tikzpicture}
	[-,auto,thin,
	boxR/.style={rectangle,text width=4em,fill=blue!5,draw,align=center},
	boxC/.style={rectangle,text width=1em,fill=orange!5,draw,align=center},
	mid arrow/.style={draw, postaction={decorate},
		decoration={
			markings, mark=at position 0.6 with {\arrow[scale=2]{>}}}},
	end arrow/.style={draw, postaction={decorate},
		decoration={
			markings, mark=at position 1 with {\arrow[scale=2]{>}}}}	
	]
	
	\node[inner sep=0pt] (empty0) {};
	\node[] (empty00) [above of = empty0, node distance = 1cm] {};
	\node[boxR] (Ck) [right of = empty0,node distance = 3cm] {$C_k$};
	\node[] (empty1) [right of = Ck,node distance = 2cm]{};
	
	\node[] (U0) [below of = empty0,node distance = 1cm] {};
	\node[] (U1) [right of = U0,node distance = 5cm] {};

	\node[] (pattern) [right of = empty1,node distance = 4.5cm,text width=15em]{$Z[C_k](I_{k-1}-I_k)=U_k,\;\; k\geq 1$};

	
	\draw[mid arrow,>=stealth] (empty0) -- (Ck) node[above=1mm,text width=14em] {$I_{k-1}-I_k$};		
	\draw[] (Ck) -- (empty1) {};	

	\draw[end arrow,>=stealth] (U1) -- (U0) node[below=1mm,text width = -11em]{$U_k$};
	
	\draw[] (empty00) -- (empty0);
	
	\end{tikzpicture}
	    \caption{Current-voltage relationship in resistance}
    \label{RkR}
\end{figure}
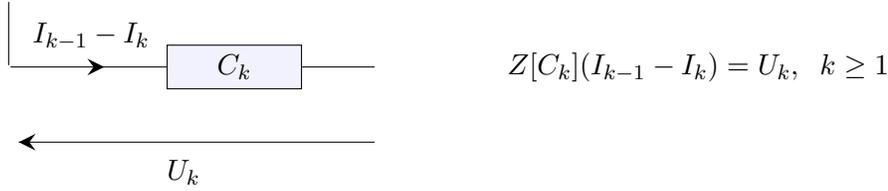

Let $U\in C(\R_+\times\N ;\R), I\in C(\R_+\times\N;\R), R\in C(\N;\R), C\in C(\N;\R)$. We consider a Cauer type RC network (Figure \ref{recurrent}) with 
$$
I_k = I(s,k),\;\; U_k = U(s,k),\;\; R_k = R(k),\;\; C_k = C(k).
$$
for all $k\geq 0$. The impedance of resistor and capacitor in RC circuit is given by
$$
Z[R_k](s) = R_k,\;\;\;\; Z[C_k](s) = \frac{1}{sC_k}
$$
for all $k\geq 0$. Then  the classical relations for elements behaviour of the Cauer type RC network can be written in the form:
\begin{equation} \label{relsys}
\begin{array}{rcl}
\vspace{2mm}
-\ds\frac{1}{R^{\sigma}(k)}\Delta_+[U] (s,k) &=& I(s,k),\;\;\;\; k\geq 0,\\
-\ds\frac{1}{C(k)}\Delta_-[I] (s,k)  &=& s U(s,k), \;\;\;\; k\geq 1.
\end{array}
\end{equation}
where $R^{\sigma}(k) = R(k+1)$.
From (\ref{relsys}) we derive for all $k\geq 1$
\begin{equation} \label{RCeq}
sU(s,k) = -\frac{1}{C(k)}\Delta_-\Big[-\frac{1}{R^{\sigma}(k)}\Delta_+[U](s,k)\Big](s,k).
\end{equation}

This equation is very close to the discrete form of a diffusion equation with spatially dependents coefficients \eqref{deq} discuss in the previous section. We precise this connection in the next Section.

\subsection{Interpretation of a diffusion equation as a Cauer type network}

Comparing (\ref{deq}) and (\ref{RCeq}), we see that (\ref{deq}) can be obtained as a Cauer type network by choosing appropriately the functions $R$ and $C$ in $C(\N ,\R )$. \\

Precisely, let $\T_z=\{z_k=kh\}_{k\geq 0}$. We introduce the mapping $\pi_h :\N \rightarrow \T_z$ defined for all $k\in \N$ by $\pi_h (k)=kh$. We have the following diagram
\begin{center}
\begin{tikzpicture}[>={Classical TikZ Rightarrow[length=2mm,width=0pt 10]}]]
  \matrix (m) [matrix of math nodes,row sep=2em,column sep=5em,minimum width=2em]
    {
    \T_z & \N \\
    \R & \\   
    };
   \path[->]
        (m-1-1) edge node[left] {$f\in C(\T;\R)$} (m-2-1);
    \path[->]
        (m-1-2) edge node [above]{$\pi_h$} (m-1-1);
    \path[->]
        (m-1-2) edge node [below right] {$f\circ \pi_h\in C(\N;\R)$} (m-2-1);
\end{tikzpicture}
\end{center}
allowing to associated to any function $f\in C(\T_z , \R)$ a function in $C(\N ,\R )$.\\

We then choose $R$ and $C$ such that the following relations are satisfied
\begin{equation} \label{R}
-\frac{1}{C} = \beta \circ \pi_h,\;\;\;\; \beta\in C(\T,\R),
\end{equation}
\begin{equation} \label{C}
-\frac{1}{R^{\sigma}} = \gamma \circ \pi_h,\;\;\;\; \gamma\in C(\T,\R).
\end{equation}

We want to select specific Cauer-type ladder networks which are able to exhibits fractional behaviors. In order to do that, we first derive an explicit form of the transfer function associated to a Cauer type ladder networks. 

\section{The transfer function of a Cauer tpe ladder network}

\subsection{Recursive expansion of the transfer function}
Let us denote by $H:C(\R_+\times \N;\R)$ the transfer function
$$
H(s,k) = \frac{I(s,k)}{U(s,k)},\;\;\;\; k\geq 0.
$$
Using (\ref{relsys}) we obtain for all $k\geq 0$
\begin{equation} \label{tranfun}
H(s,k) = \frac{\ds\frac{1}{R(k+1)}}{1+\ds\frac{\ds\frac{1}{sR(k+1)C(k+1)}}{1+\ds\frac{1}{sC(k+1)}H(s,k+1)}}.
\end{equation}
Denoting by $Z_C(s) = Z[C](s) =  \ds\frac{1}{sC}$, (\ref{tranfun}) can be rewritten as
\begin{equation} \label{tranfun1}
H(s,k) = \frac{\ds\frac{1}{R(k+1)}}{1+\ds\frac{\ds Z_{C(k+1)}(sR(k+1))}{1+\ds Z_{C(k+1)}(s)H(s,k+1)}}.
\end{equation}

\subsection{Continued fraction}

In order to write clearly the recursive formula, we introduce the following form of the continued fractions:
\begin{equation} \label{cf}
[a_1,\ldots,a_k] = \frac{a_1}{1+\ds\frac{a_2}{1+\ds\frac{a_3}{1+\cdots\ds\frac{\phantom{x}}{1+\ds\frac{a_{k-1}}{1+a_k}}}}}
\end{equation}

The continued fraction (\ref{cf}) can be rewritten by construction as:
\begin{equation} 
\label{cf1}
[a_1,a_2,\ldots,a_k] = [a_1,[a_2,\ldots,a_k]]
\end{equation}
and we have 
\begin{equation} \label{cf2}
\alpha[a_1,a_2,\ldots,a_k] = [\alpha a_1,a_2,\ldots,a_k].
\end{equation}

\subsection{Continued fraction and the transfer function}

Based on relations (\ref{cf1}) and (\ref{cf2}) we are able to introduce the following lemma:
\begin{lemma} \label{lem1}
The closed formula for transfer function $H\in C(\R_+\times \N)$ has the form
\begin{equation} \label{H0cf}
H(s,0) = \Big[\frac{1}{R(1)},Z_{C(1)}(R(1)s),Z_{C(1)}(R(2)s),\ldots,Z_{C(k)}(R(k)s),Z_{C(k)}(R(k+1)s),\ldots\Big]
\end{equation}
\end{lemma}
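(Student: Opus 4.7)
The plan is to expand the transfer function by iterating the recursive formula (\ref{tranfun1}) and then repackaging the result using the continued-fraction identities (\ref{cf1})--(\ref{cf2}). As a starting point, I would rewrite (\ref{tranfun1}) as a depth-three continued fraction,
\begin{equation*}
H(s,k) = \left[\frac{1}{R(k+1)},\; Z_{C(k+1)}(sR(k+1)),\; Z_{C(k+1)}(s)H(s,k+1)\right],
\end{equation*}
so that evaluating at $k=0$ immediately produces the first two entries of (\ref{H0cf}) and leaves a tail $Z_{C(1)}(s)H(s,1)$ to be expanded.

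The core computational step is to identify the continued-fraction expansion of $Z_{C(k+1)}(s)H(s,k+1)$. Applying the displayed identity one level deeper and multiplying both sides by the scalar $Z_{C(k+1)}(s)=\frac{1}{sC(k+1)}$, the homogeneity rule (\ref{cf2}) gives
\begin{equation*}
Z_{C(k+1)}(s)H(s,k+1) = \left[\frac{Z_{C(k+1)}(s)}{R(k+2)},\; Z_{C(k+2)}(sR(k+2)),\; Z_{C(k+2)}(s)H(s,k+2)\right],
\end{equation*}
and the elementary simplification $\frac{1}{sC(k+1)R(k+2)}=Z_{C(k+1)}(sR(k+2))$ turns the leading entry into the expected $Z_{C(k+1)}(sR(k+2))$. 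Plugging this back into the expression for $H(s,k)$ and using the concatenation rule (\ref{cf1}) flattens the result into a continued fraction of depth five, adding two further entries that match the alternating pattern $Z_{C(k)}(sR(k)),\; Z_{C(k)}(sR(k+1))$ visible in (\ref{H0cf}).

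Iterating this expansion step indefinitely and specializing to $k=0$ then yields the claimed formula, with the infinite continued fraction understood as the limit of its finite truncations. The main obstacle is purely bookkeeping: one has to track carefully when the argument of $Z_{C(\cdot)}$ is $s$ versus $sR(\cdot)$ and make sure the indices of $R$ and $C$ shift in lockstep so that consecutive pairs always share the same capacitor index. No deeper mechanism is needed, since (\ref{cf1}) and (\ref{cf2}) are purely formal rewriting rules and each extension step amounts to one application of each, packaged as an easy induction on the number of entries exposed.
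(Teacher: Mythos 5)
Your proposal is correct and takes essentially the same route as the paper's proof: both rewrite \eqref{tranfun1} as the depth-three continued fraction $\bigl[\tfrac{1}{R(k+1)},Z_{C(k+1)}(R(k+1)s),Z_{C(k+1)}(s)H(s,k+1)\bigr]$, absorb the scalar $Z_{C(k+1)}(s)$ into the next level via \eqref{cf2} together with the identity $Z_{C(k+1)}(s)/R(k+2)=Z_{C(k+1)}(R(k+2)s)$, flatten with \eqref{cf1}, and conclude by recurrence. No substantive difference.
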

\begin{proof}
Using (\ref{tranfun1}) and (\ref{cf}) we can write
$$
H(s,0) = \Big[\ds\frac{1}{R(1)},Z_{C(1)}(R(1)s),Z_{C(1)}(s)H(s,1)\Big].
$$
and for $k\geq 1$ we have
\begin{equation} \label{Hk}
H(s,k) = \Big[\ds\frac{1}{R(k+1)},Z_{C(k+1)}(R(k+1)s),Z_{C(k+1)}(s)H(s,k+1)\Big].
\end{equation}
Replacing $H(s,1)$ in $H(s,0)$ by expression (\ref{Hk}) for $k=1$, we have:
$$
H(s,0) = \Bigg[\ds\frac{1}{R(1)},Z_{C(1)}(R(1)s),Z_{C(1)}(s)\Big[\ds\frac{1}{R(2)},Z_{C(2)}(R(2)s),Z_{C(2)}(s)H(s,2)\Big]\Bigg].
$$
Form (\ref{cf1}) and from $\frac{1}{\alpha}Z_{C(1)}(s) = Z_{C(1)}(\alpha s)$ we obtain
$$
H(s,0)=\Big[\ds\frac{1}{R(1)},Z_{C(1)}(R(1)s),Z_{C(1)}(R(2)s),Z_{C(2)}(R(2)s),Z_{C(2)}(s)H(s,2)\Big].
$$
By recurrence we obtain the form (\ref{H0cf}).
\end{proof}

\section{Fractal Cauer type ladder networks}

Fractal Cauer type ladder networks are specific type of Cauer ladder networks which can be defined using few parameters and exhibiting a rich dynamics. We give an explicit form for the transfer function of a Fractal Cauer type ladder network.

\subsection{Fractal Cauer ladder networks}

We consider a Cauer type ladder network with coefficients $R_k$ and $L_k$ for $k\geq 1$. As usual, we identify $R_k$ and $L_k$ as the values of two functions $R:\N \rightarrow \R$ and $L:\N \rightarrow \R$ at point $k\in \N$.\\

A Cauer type ladder network is said fractal if there exists two constants $\sigma$ and $\rho$, such that we have the scaling relation
\begin{equation} \label{RC}
R(k+1) = \sigma R(k),\;\;\;\; C(k+1) = \rho C(k).
\end{equation}

\subsection{Fractal Cauer transfer function}

Then we can simplify the expression of the transfer function $H(s,0)$ given by (\ref{H0cf}), as follows
\begin{equation} 
\label{H0rs}
H(s,0) = \Big[\frac{1}{R_1},Z(s),\frac{1}{\sigma}Z(s),\frac{1}{\rho\sigma}Z(s),\ldots,\frac{1}{\rho^n \sigma^n}Z(s),\frac{1}{\rho^n \sigma^{n+1}}Z(s),\ldots\Big]
\end{equation}
with $Z(s) = Z_{C(1)}(R(1) s)$.

\subsection{Diffusion equation associated to a fractal Cauer type network}

Using (\ref{R}) and (\ref{C}), we want to explicit the functions $\beta$ and $\gamma$ such that $R$ and $C$ satisfy the \textbf{fractality} condition (\ref{RC}).

By definition, $R$ and $C$ depend on $h$. As a consequence, the \textbf{fractal} constant $\sigma$ and $\rho$ are functions of $h$. Instead of searching $\beta$ and $\gamma$, we look for two \textbf{continuous} functions $r(\zeta)$ and $c(\zeta)$ satisfying
$$
r(kh) = R(k),\;\;\;\; c(kh) = C(k).
$$
Then the scaling relations (\ref{RC}) read as follows for all $h> 0$
\begin{equation} \label{rc}
\begin{array}{rl}
\vspace{1mm}
r((k+1)h) = & \sigma(h) \; r(kh),\\
c((k+1)h) = & \rho(h) \; c(kh),
\end{array} \;\;\; \forall_{k\geq 0}.
\end{equation}
These two conditions are of the form: for a given $\varsigma:\R\to\R$ find $f:\R\to \R$ such that for all $h>0$ we have
\begin{equation} \label{fvar}
f((k+1)h) = \varsigma(h)f(kh),\;\;\;\; \forall_{k\geq 0}.
\end{equation}
From (\ref{fvar}) for $k=0$ we have
$$
f(h) = \varsigma(h) f(0),
$$
and we obtain a definition of function $\varsigma$ in (\ref{fvar}):
\begin{equation} \label{var} 
\varsigma(h) = \frac{f(h)}{f(0)}
\end{equation}
Posing $g(\zeta) = \displaystyle\frac{f(\zeta)}{f(0)}$, the equation (\ref{fvar}) with (\ref{var}) reads as follows
\begin{equation} \label{gkh}
\forall_{h>0}\;\;\;\; g((k+1)h) = g(h)g(kh),\;\;\;\; \forall_{k\geq 0}.
\end{equation}
This functional relation has very particular solution:
\begin{theorem}
A continuous real valued function $g$ is a solution of (\ref{gkh}) if anf only if it there exists a constant $\lambda \in \R$ such that
\begin{equation} \label{gexp}
g(\zeta) = e^{\zeta\lambda}.
\end{equation}
\end{theorem}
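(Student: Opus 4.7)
The plan is to reduce (\ref{gkh}) to the classical Cauchy multiplicative equation $g(x+y) = g(x)g(y)$ on $\R_+$ and then invoke the standard fact that continuous solutions of that equation are exponentials. The direction $g(\zeta) = e^{\zeta\lambda} \Rightarrow$ (\ref{gkh}) is an immediate verification via $e^{(k+1)h\lambda} = e^{h\lambda} e^{kh\lambda}$, so I would concentrate on the converse.

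For the converse, I would first set $k=0$ in (\ref{gkh}) to obtain $g(h) = g(h)\,g(0)$ for every $h>0$, which forces $g(0) = 1$ unless $g \equiv 0$ on $\R_+$; the identically-zero solution is not of exponential form and has to be excluded as a degenerate case. A simple induction on $k$ then yields, for every $h>0$ and every $n \in \N$,
\[
g(nh) = g(h)^n.
\]

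The next step is to promote this scaling identity to full multiplicativity on commensurable pairs: if $x = ph$ and $y = qh$ with $p, q \in \N$ and $h > 0$, then
\[
g(x+y) = g((p+q)h) = g(h)^{p+q} = g(ph)\, g(qh) = g(x)\, g(y).
\]
Since every pair of positive rationals is commensurable and $\Q \cap \R_+$ is dense in $\R_+$, continuity of $g$ extends $g(x+y) = g(x)g(y)$ to all $x, y \geq 0$. Before taking logarithms I must check positivity: from $g(\zeta) = g(\zeta/2)^2 \geq 0$ we get $g \geq 0$, and if $g(\zeta_0) = 0$ at some $\zeta_0 > 0$, then applying $g(\zeta_0) = g(\zeta_0/n)^n$ and the scaling identity gives $g(m\zeta_0/n) = 0$ for every $m, n \in \N$, so by density and continuity $g \equiv 0$, contradicting $g(0) = 1$. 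Hence $g > 0$ on $\R_+$, and $G := \ln g$ is a continuous additive map on $\R_+$, so $G(\zeta) = \lambda \zeta$ for some $\lambda \in \R$, giving $g(\zeta) = e^{\lambda \zeta}$.

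The point that deserves attention is conceptual rather than computational: the functional relation (\ref{gkh}) is apparently weaker than the Cauchy equation because it only couples $g(x+y)$ to $g(x) g(y)$ when $x$ is an integer multiple of $y$. The key observation is that the universal quantifier ``for all $h > 0$'' produces, upon varying $h$, enough commensurable pairs $(ph, qh)$ to cover a dense subset of $\R_+ \times \R_+$, so continuity of $g$ closes the gap. The remaining ingredients (induction, sign analysis, reduction to continuous additive functions) are routine.
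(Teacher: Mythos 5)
Your proof is correct and follows essentially the same route as the paper: reduce (\ref{gkh}) to the Cauchy multiplicative equation on commensurable (hence all rational) pairs, extend to $\R_+$ by density and continuity, and invoke the classical characterization of continuous homomorphisms from $(\R,+)$ to $(\R^+,\times)$. You are in fact slightly more careful than the paper, which asserts that $g$ is positive and ignores the degenerate solution $g\equiv 0$ without justification, whereas you handle both points explicitly via $g(0)=1$ and the identity $g(\zeta)=g(\zeta/2)^2$.
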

\begin{proof}
First we proof that the functional relation (\ref{gkh}) is satisfied.
This proof consists with three steps:
\begin{itemize}
\item[1)] $\displaystyle\forall_{k,k'\in \N}$
$$
g((k+k')\zeta) = g(k\zeta)g(k'\zeta);
$$
\item[2)] $\displaystyle\forall_{a,b\in \Q}$
$$
g(a+b) = g(a)g(b);
$$
\item[3)] $\displaystyle\forall_{u,v\in \R}$
$$
g(u+v) = g(k\zeta)g(k'\zeta);
$$

\end{itemize}

The first equality is proved by induction. Indeed, we have
\begin{equation}
g((k+k')z)=g((k+k'-1)z+z)=g((k+k'-1)z)g(z) .
\end{equation}
Replacing $g((k+k'-1)z)$ by $g((k+k'-2)z)g(z)$ we obtain
\begin{equation}
g((k+k')z)=g((k+k'-2)z)g(z)g(z) .
\end{equation}
But $g(z)g(z)=g(2z)$ so
\begin{equation}
g((k+k')z)=g((k+k'-2)z)g(2z) .
\end{equation}
By iteration, we obtain for all $i=0,\dots ,k'$, that 
\begin{equation}
g((k+k'-i)z+iz)=g((k+k'-i)z)g(iz).
\end{equation}
In particular, for $i=k'$, we obtain 
\begin{equation}
g((k+k')z)=g(kz)g(k'z) .
\end{equation}
As $z$ can be negative, the result extend to $k,k'\in \Z$. 

The second equality is a simple computation using equality 1). We consider two rational numbers of the form $m/n$ and $p/q$ with $n,q\in \N^*$ and $m,n\in \N$. We have  
\begin{equation}
g \left  ( 
\di\frac{m}{n}+\di\frac{p}{q} 
\right ) 
=g\left ( 
\di\frac{mq+np}{nq} \right ) = 
g\left ( 
(mq+np)\di\frac{1}{nq} \right ) 
\end{equation}
Using equality 1), we then obtain
\begin{equation}
g \left  ( 
\di\frac{m}{n}+\di\frac{p}{q} 
\right ) 
=g\left ( 
mq\di\frac{1}{nq}
\right ) 
g\left ( 
np\di\frac{1}{nq} \right ) =
g\left ( 
\di\frac{m}{n}
\right ) 
g\left ( 
\di\frac{p}{q} \right ) .
\end{equation}

The last equality is proved by density using the continuity of $g$. Let $u,v\in \R^+$ and $u_n$, $v_n$, $n\in \N$ be two sequences of rational numbers converging to $u$ and $v$ respectively. Then, for all $n\in \N$, we have by equality 2) that
\begin{equation}
g(u_n + v_n)=g(u_n ) g(v_n ) .
\end{equation}
Taking the limit when $n$ goes to $+\infty$ and by continuity of $g$, we obtain for all $u,v\in \R$ that 
\begin{equation}
g(u+v)=g(u)g(v) .
\end{equation}
As $g$ is a positive function, the previous equality means that $g$ is a continuous morphism of the additive group $(\R ,+)$ to the multiplicative group $(\R^+ ,\times )$. It is known in this case that $g$ must be of the form 
$g(z)=e^{\lambda z}$ for a given $\lambda\in \R$.
\end{proof}

Using the previous Theorem, we obtain the following structure result:

\begin{theorem}
Continuous positive real valued functions $R$ and $C$ satisfy equality \eqref{R} and \eqref{C}, respectively, if and only if there exists two real constants $\lambda_R$ and $\lambda_C$ such that $R(z)=R_0 e^{\lambda_R z}$ and $C(z)=C_0 \di e^{\lambda_C z}$
\end{theorem}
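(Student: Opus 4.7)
The plan is to reduce the claim to the preceding theorem on the functional equation (\ref{gkh}) via a straightforward normalisation, so almost all the work is already done. The content of the statement is that the fractality scaling (\ref{RC}), reinterpreted through the continuous functions $r,c$ satisfying (\ref{rc}) for every $h>0$, forces $r$ and $c$ to be exponentials; the two directions of the ``if and only if'' will be treated separately.

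For the nontrivial implication, I would first normalise: since $R$ and $C$ are positive, set $g_R(\zeta)=r(\zeta)/r(0)$ and $g_C(\zeta)=c(\zeta)/c(0)$. The derivation that produced (\ref{gkh}) from (\ref{fvar})--(\ref{var}) applies verbatim to both $r$ and $c$: taking $k=0$ in (\ref{rc}) identifies $\sigma(h)=g_R(h)$ and $\rho(h)=g_C(h)$, after which (\ref{rc}) becomes exactly
\begin{equation*}
g_R((k+1)h)=g_R(h)\,g_R(kh),\qquad g_C((k+1)h)=g_C(h)\,g_C(kh),
\end{equation*}
for all $k\ge 0$ and all $h>0$. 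Since $r$ and $c$ are continuous, so are $g_R$ and $g_C$, and the previous theorem then yields constants $\lambda_R,\lambda_C\in\R$ with $g_R(\zeta)=e^{\lambda_R \zeta}$ and $g_C(\zeta)=e^{\lambda_C \zeta}$. Setting $R_0=r(0)$ and $C_0=c(0)$ gives $R(z)=R_0 e^{\lambda_R z}$ and $C(z)=C_0 e^{\lambda_C z}$.

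For the converse I would simply substitute: if $R(z)=R_0 e^{\lambda_R z}$, then $R(z+h)=e^{\lambda_R h}R(z)$, so (\ref{RC}) holds with $\sigma(h)=e^{\lambda_R h}$, and analogously $\rho(h)=e^{\lambda_C h}$ handles $C$. This direction requires no hypothesis on continuity beyond what an exponential already satisfies.

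There is no real obstacle here; the only point that needs care is bookkeeping between the discrete data $R(k),C(k)$ on $\N$ and the continuous interpolants $r,c$ on $\R_+$, together with making explicit that the scale factors $\sigma(h),\rho(h)$ in (\ref{rc}) must be precisely $g_R(h),g_C(h)$. Once that identification is written out, the theorem is a direct corollary of the previous one, with $R_0=r(0)$ and $C_0=c(0)$ the evident constants of integration.
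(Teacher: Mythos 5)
Your proposal is correct and matches the paper's intent exactly: the paper offers no written proof, simply asserting the theorem as a direct consequence of the preceding result on the functional equation \eqref{gkh}, and your normalisation $g_R=r/r(0)$, $g_C=c/c(0)$ together with the identification $\sigma(h)=g_R(h)$, $\rho(h)=g_C(h)$ is precisely the reduction the paper leaves implicit. You also correctly read the (apparently mislabeled) references to \eqref{R} and \eqref{C} as the fractality condition \eqref{rc}, and your use of positivity to justify the normalisation is exactly the hypothesis the previous theorem's proof needs.
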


As a consequence, Fractal Cauer type networks are associated to diffusion equations characterized by 
$\beta (z)=-\beta_0 \di e^{-\lambda_R z}$ and $\gamma (z)=-\gamma_0 \di e^{-\lambda_C z}$. \\

We then obtain the following connection between Fractal Cauer type networks and generalized diffusion equations:

\begin{theorem}
Fractal Cauer type networks with characteristics $\lambda_R$ and $\lambda_C$ are in one to one correspondence for all $h>0$ with the discretization in space along the space-scale $\{ kh \}$, $k\in \N$ of the diffusion equation with non-constant diffusion coefficient
\begin{equation}
\partial_t u =-\gamma_0 \di e^{-\lambda C z} 
\di\partial_z \left ( 
-\beta_0 \di e^{-\lambda_R z} \partial_z u 
\right ) .
\end{equation}
\end{theorem}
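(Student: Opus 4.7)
The plan is to reduce this correspondence to the preceding Theorem characterizing solutions of the functional equation \eqref{gkh}. The proof will proceed in two directions, showing that the data of a Fractal Cauer network $(\lambda_R,\lambda_C,R_0,C_0)$ uniquely determines (and is determined by) a diffusion equation \eqref{diff} of the stated exponential form, for every mesh parameter $h>0$.

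For the forward direction, I start with a Fractal Cauer network: by definition the sequences $R(k), C(k)$ satisfy the scaling relations \eqref{RC}. Looking for continuous extensions $r,c$ with $r(kh)=R(k)$ and $c(kh)=C(k)$, the scaling conditions transcribe into \eqref{rc}, which after normalization by $r(0),c(0)$ becomes exactly the functional equation \eqref{gkh}. The preceding Theorem then forces
\[
r(z)=R_0 e^{\lambda_R z},\qquad c(z)=C_0 e^{\lambda_C z},
\]
with $\sigma=e^{\lambda_R h}$ and $\rho=e^{\lambda_C h}$. Plugging these into the identification formulas \eqref{R} and \eqref{C} yields
\[
\beta(z)=-\frac{1}{c(z)}=-\beta_0 e^{-\lambda_C z},\qquad
\gamma(z)=-\frac{1}{r(z+h)}=-\gamma_0 e^{-\lambda_R z},
\]
with $\beta_0=1/C_0$ and $\gamma_0=e^{-\lambda_R h}/R_0$. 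Substituting these functions into the original diffusion equation \eqref{diff} produces the stated PDE.

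For the converse direction, I take any diffusion equation of the stated exponential form. Reading off $\beta$ and $\gamma$ from the coefficients and inverting the relations \eqref{R}, \eqref{C} defines $c(z)=-1/\beta(z)$ and $r(z)=-1/\gamma(z-h)$, which are themselves exponentials in $z$. Their restrictions to the mesh $\{kh\}$ then automatically satisfy \eqref{RC} with $\sigma=e^{\lambda_R h}$ and $\rho=e^{\lambda_C h}$, producing a Fractal Cauer network with the prescribed characteristics. Injectivity of the map in both directions is immediate since each step is a bijection between the relevant parameter data.

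The main obstacle is purely bookkeeping rather than conceptual: one must be careful with the shift appearing in $R^{\sigma}(k)=R(k+1)$ when translating between $\gamma$ and $r$, and with the absorption of the $h$-dependent prefactor $e^{-\lambda_R h}$ into the constant $\gamma_0$. Provided this is handled consistently, the rest of the argument is a direct application of the preceding Theorem together with the definitions \eqref{R}--\eqref{C}.
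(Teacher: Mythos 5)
Your overall route is the same as the paper's: the paper offers no separate argument for this theorem and treats it as an immediate consequence of the structure theorem for solutions of \eqref{gkh}, which is exactly the reduction you carry out (fractality \eqref{RC} $\Rightarrow$ \eqref{rc} $\Rightarrow$ \eqref{gkh} $\Rightarrow$ exponential $r,c$ $\Rightarrow$ exponential $\beta,\gamma$), so there is no methodological divergence to report.

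There is, however, a concrete error in your final bookkeeping step, and it is precisely the kind you warned yourself about. Following \eqref{R} and \eqref{C} literally you obtain $\beta(z)=-\beta_0 e^{-\lambda_C z}$ and $\gamma(z)=-\gamma_0 e^{-\lambda_R z}$; substituting these into \eqref{diff} gives
\[
\partial_t u = -\gamma_0 e^{-\lambda_R z}\,\partial_z\!\left(-\beta_0 e^{-\lambda_C z}\,\partial_z u\right),
\]
which is \emph{not} the displayed equation of the theorem: the roles of $\lambda_R$ and $\lambda_C$ are exchanged. So your closing claim that this ``produces the stated PDE'' does not follow from your own formulas. The source of the mismatch is that \eqref{R} and \eqref{C} are themselves inconsistent with the comparison of \eqref{deq} and \eqref{RCeq} from which they are supposed to be read off: in \eqref{deq} the \emph{outer} coefficient is $\gamma$ and the inner one is $\beta$, while in \eqref{RCeq} the outer coefficient is $-1/C(k)$ and the inner one is $-1/R^{\sigma}(k)$, so the identification forced by that comparison is $\gamma\circ\pi_h=-1/C$ and $\beta\circ\pi_h=-1/R^{\sigma}$ --- the opposite of what \eqref{R}--\eqref{C} state. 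With that corrected identification you get $\gamma(z)=-\gamma_0 e^{-\lambda_C z}$ and $\beta(z)=-\beta_0 e^{-\lambda_R z}$ (with $\gamma_0=1/C_0$ and $\beta_0=e^{-\lambda_R h}/R_0$), and substitution into \eqref{diff} then does yield the theorem's equation. You should either rederive the identification directly from \eqref{deq} versus \eqref{RCeq}, or explicitly flag that \eqref{R} and \eqref{C} must be swapped; as written, your forward direction proves the statement with $\lambda_R$ and $\lambda_C$ interchanged. The rest --- the appeal to the functional-equation theorem, the handling of the shift $R^{\sigma}$, the converse direction, and the bijectivity remark --- is fine and at the same level of rigor as the paper.
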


\section{Fractional behavior of fractal Cauer networks}

A fractional behavior is associated to a transfer function of the form
\begin{equation}
H(s)=cs^{\nu},
\end{equation}
where $c$ and $\nu$ are two constants. The main problem is to determine if a fractal Cauer type systems exhibits a fractional behavior and if yes, to characterize the exponent. \\

We first derive structural relations satisfied by the transfer function of fractal Causer networks and we then determine the possible fractional behaviors.

\subsection{Structural relations}

Let us denote by $g(z,\sigma ,\rho )$ the function
\begin{equation}
\label{eq19}
g(z,\sigma ,\rho )=[z, z\sigma^{-1}, z (\sigma \rho )^{-1} , z (\sigma^2 \rho )^{-1}, z(\sigma^2 \rho^2 )^{-1},\dots ] .
\end{equation}

Then, using equation \eqref{H0rs}, we have
\begin{equation}
H(s,\sigma ,\rho ) =[R_1^{-1}, g(Z(s),\sigma ,\rho) ] .
\end{equation}

We have the following functional relation satisfied by the function $g$:

\begin{theorem}
\label{funcrelg}
The function $g$ satisfies 
\begin{equation}
\label{eq21}
g(Z(s),\sigma ,\rho ) =\di\frac{Z(z)}{1+g(Z(\sigma s ),\rho ,\sigma )} .
\end{equation}
\end{theorem}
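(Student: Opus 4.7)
The plan is to unfold the continued-fraction definition \eqref{eq19} one step, identify the tail as another instance of $g$ with the roles of $\sigma$ and $\rho$ swapped, and then use the explicit form of $Z$ to produce the $Z(\sigma s)$ on the right-hand side. Throughout I will use the recursive identity $[a_1,a_2,\ldots]=\di\frac{a_1}{1+[a_2,a_3,\ldots]}$ coming from \eqref{cf1}.

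First I would write out, directly from \eqref{eq19}, the sequence of arguments in $g(z,\sigma,\rho)$, namely
\[
a_1=z,\; a_2=z\sigma^{-1},\; a_3=z(\sigma\rho)^{-1},\; a_4=z(\sigma^2\rho)^{-1},\; a_5=z(\sigma^2\rho^2)^{-1},\;\ldots,
\]
so that going from one term to the next divides alternately by $\sigma$ and by $\rho$. Applying \eqref{cf1} once yields
\[
g(z,\sigma,\rho)=\frac{z}{1+[a_2,a_3,a_4,\ldots]}=\frac{z}{1+[z\sigma^{-1},\,z(\sigma\rho)^{-1},\,z(\sigma^2\rho)^{-1},\,\ldots\,]}.
\]

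The key bookkeeping step, which I expect to be the main (minor) obstacle, is recognising the tail as a value of $g$ at a different basepoint with the parameters exchanged. Setting $z'=z/\sigma$, the sequence of tail entries is $z',\;z'/\rho,\;z'/(\rho\sigma),\;z'/(\rho^2\sigma),\;z'/(\rho^2\sigma^2),\ldots$, which is precisely the pattern defining $g(z',\rho,\sigma)$: one alternates dividing by $\rho$ and $\sigma$ starting with $\rho$. Hence
\[
[z\sigma^{-1},\,z(\sigma\rho)^{-1},\,z(\sigma^2\rho)^{-1},\,z(\sigma^2\rho^2)^{-1},\,\ldots\,]=g\!\left(\frac{z}{\sigma},\rho,\sigma\right),
\]
and therefore $g(z,\sigma,\rho)=\di\frac{z}{1+g(z/\sigma,\rho,\sigma)}$.

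It remains to specialise to $z=Z(s)$. Since $Z(s)=Z_{C(1)}(R(1)s)=1/(sR(1)C(1))$, we have $Z(s)/\sigma=1/(\sigma s R(1)C(1))=Z(\sigma s)$. Substituting gives
\[
g(Z(s),\sigma,\rho)=\frac{Z(s)}{1+g(Z(\sigma s),\rho,\sigma)},
\]
which is \eqref{eq21} (reading the $Z(z)$ in the statement as the evident misprint for $Z(s)$). The only thing that is not completely routine is the combinatorial verification that the parameters swap correctly between $(\sigma,\rho)$ and $(\rho,\sigma)$ in the tail, but this follows from the strict alternation prescribed by \eqref{eq19}.
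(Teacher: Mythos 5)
Your proposal is correct and follows essentially the same route as the paper's own proof: peel off the first entry of the continued fraction via \eqref{cf1}, recognise the tail $[z\sigma^{-1}, z(\sigma\rho)^{-1},\dots]$ as $g(z/\sigma,\rho,\sigma)$ with the parameters exchanged, and use $Z(\sigma s)=Z(s)\sigma^{-1}$ to conclude. Your reading of $Z(z)$ as a misprint for $Z(s)$ is also the intended one.
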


It is important to notice the permutation between $\sigma$ and $\rho$ in the formula \eqref{eq21}. Formula \eqref{funcrelg} was first stated by J. Sabatier in (\cite{sab2020}, Property 1 p.249) without a formal proof.

\begin{proof}
Using equality \eqref{cf1} we can write
\begin{equation}
g(Z(s),\sigma ,\rho )= [Z(s), [Z(s)\sigma^{-1}, Z(s) (\sigma \rho )^{-1}, \dots , Z(s) (\sigma \rho )^{-n} , Z(s) \sigma^{-1} (\sigma \rho )^{-n},\dots ]] . 
\end{equation}
As we have $Z(\sigma s )=Z(s)\sigma^{-1}$, we obtain
\begin{equation}
\left .
\begin{array}{lll}
[Z(s)\sigma^{-1}, \dots , Z(s) (\sigma \rho )^{-n} , Z(s) \sigma^{-1} (\sigma \rho )^{-n},\dots ] & = &  
[Z(\sigma s ) , Z(\sigma s) \rho^{-1}, \dots , Z(\sigma s ) (\rho \sigma )^{-n}, Z(\sigma s) \rho^{-1} (\rho \sigma )^{-n},\dots ],\\
 & = & g(Z(\sigma s),\rho ,\sigma ) .
\end{array}
\right .
\end{equation}
This concludes the proof.
\end{proof}

The previous functional relation is difficult to study directly. We can look for a simplified version of it following an idea already used by A. Oustaloup in [\cite{Oustaloup},p.208]:\\

Let us assume that $s$ is such that $g$ satisfies 
\begin{equation}
\mid g(Z(s),\rho ,\sigma ) \mid >>1 
\end{equation}
or equivalently that 
\begin{equation}
\mid g(Z(s),\rho ,\sigma ) \mid^{-1} << 1. 
\end{equation}
Let $1>>\epsilon >0$ be such that 
\begin{equation}
\mid g(Z(s),\rho ,\sigma ) \mid^{-1} < \epsilon. 
\end{equation}
We consider the simplified functional relation 
\begin{equation}
\label{funceps}
g(Z(s),\sigma ,\rho ) g(Z(\sigma s ),\rho ,\sigma ) =\di\frac{Z(s)}{1+\epsilon},
\end{equation}
corresponding to an approximation of \eqref{eq21}. \\

In the next Section, we characterize the fractional behaviors compatible with the functional relation \eqref{funceps}.

\subsection{Fractional behaviors}

We look for solutions of \eqref{funceps} of the form
\begin{equation}
\label{fracb}
g(Z(s),\sigma ,\rho ;\epsilon )= K(\sigma ,\rho ;\epsilon ) Z(s)^{n(\sigma ,\rho )} .    
\end{equation}

Our goal is to determine the possible values for the exponent $n(\sigma ,\rho )$. \\

Replacing $g$ by its expression \eqref{fracb} in \eqref{funceps}, we obtain:

\begin{equation}
K(\sigma ,\rho ;\epsilon ) K(\rho ,\sigma ;\epsilon ) Z(s)^{n(\sigma ,\rho)+n(\rho ,\sigma )} \sigma^{-n(\rho ,\sigma )} =\di\frac{Z(s)}{1+\epsilon} .
\end{equation}

As a consequence, we obtain by identification the following system:

\begin{equation}
\label{sys}
\left .
\begin{array}{lll}
K(\sigma ,\rho ;\epsilon ) K(\rho ,\sigma ;\epsilon ) & = & \di\frac{\sigma^{n(\rho ,\sigma )}}{1+\epsilon} ,\\
n(\sigma ,\rho +n(\rho ,\sigma ) & = & 1.
\end{array}
\right .
\end{equation}

The first equation of \eqref{sys} must be valid by exchanging the role of $\sigma$ and $\rho$ due to the symmetry of the left hand side. As a consequence, we must have:

\begin{equation}
\label{intermed}
\di\frac{\sigma^{n(\rho ,\sigma )}}{1+\epsilon}
=\di\frac{\rho^{n(\sigma ,\rho )}}{1+\epsilon} .
\end{equation}

As $n(\rho ,\sigma )=1-n(\sigma ,\rho )$ by the second equation of \eqref{sys}, we deduce by replacing in \eqref{intermed} that

\begin{equation}
 \sigma =(\sigma \rho )^{n(\sigma ,\rho )}  . 
\end{equation}

We then deduce the following theorem:

\begin{theorem}
Fractional behaviors of the form \eqref{fracb} satisfying the simplified functional relation \eqref{funceps} possess a fractional exponent given by 
\begin{equation}
n(\sigma ,\rho ) =\di\frac{\ln (\sigma )}{\ln (\sigma \rho )} .
\end{equation}
\end{theorem}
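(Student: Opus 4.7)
The plan is to observe that essentially all the computational work leading to this theorem has already been carried out in the paragraph preceding its statement: from the ansatz \eqref{fracb}, the functional equation \eqref{funceps} forces the system \eqref{sys}, symmetry between $\sigma$ and $\rho$ yields the identity $\sigma^{n(\rho,\sigma)} = \rho^{n(\sigma,\rho)}$, and combining this with the relation $n(\rho,\sigma) = 1 - n(\sigma,\rho)$ gives the key consolidated identity
\begin{equation}
\sigma = (\sigma \rho)^{n(\sigma,\rho)}.
\end{equation}
So the proof amounts to a single, clean extraction of $n(\sigma,\rho)$ from this identity.

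First, I would note that $\sigma$ and $\rho$ are positive real scaling constants in the fractal relation \eqref{RC}, so both sides of the identity are positive and it is legitimate to apply the natural logarithm. Taking $\ln$ on both sides gives $\ln \sigma = n(\sigma,\rho)\,\ln(\sigma \rho)$, and then, provided $\sigma \rho \neq 1$, I divide through to obtain
\begin{equation}
n(\sigma,\rho) = \frac{\ln \sigma}{\ln(\sigma \rho)},
\end{equation}
which is the stated formula.

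The only real obstacle is the degenerate case $\sigma\rho = 1$, where the denominator vanishes. In that situation the identity $\sigma = (\sigma\rho)^{n(\sigma,\rho)}$ becomes $\sigma = 1$, which together with $\sigma\rho=1$ forces $\rho=1$ as well, degenerating the fractal scaling \eqref{RC} into a constant recurrence $R(k+1)=R(k)$, $C(k+1)=C(k)$. This is a trivial (non-fractal) configuration and should be excluded by hypothesis; I would add a brief remark to that effect rather than treat it as a separate case. Beyond this, no further estimates are required, since the identification step producing \eqref{sys} has already absorbed all the analytic content coming from the approximation parameter $\epsilon$. The proof is therefore short: invoke the already derived relation $\sigma = (\sigma\rho)^{n(\sigma,\rho)}$, take logarithms under the standing assumption $\sigma\rho \neq 1$, and conclude.
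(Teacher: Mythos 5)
Your proposal is correct and follows essentially the same route as the paper, which derives $\sigma = (\sigma\rho)^{n(\sigma,\rho)}$ in the preceding paragraph and then states the theorem with the final logarithm step left implicit. Your explicit treatment of the degenerate case $\sigma\rho = 1$ is a small but worthwhile addition the paper omits.
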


The previous result appear in (\cite{sab2020},Appendix 2, (A2.28)) as a consequences of relations satisfied by a formal expansion of the function $g$ (see \cite{sab2020},Theorem 1).  

\section{Conclusion and perspectives}

The previous results can be used in a variety of situations as already stressed by J. Sabatier in \cite{sab2020} and V. Tartaglione et al. in \cite{tartaglione,Sabatier} covering application to the modeling of diffusion in materials used for the storage of Hydrogen. The functional relations satisfied by the transfer function of a Fractal Cauer type ladder networks is very specific and the previous discussion is only a first step in its study. In particular, two mathematical directions deserve more attention:

\begin{itemize}
\item J. Sabatier in (\cite{sab2020},Theorem 1) and V. Tartagione et al. in \cite{Sabatier} discuss a result concerning the asymptotic behavior of the transfer function using an explicit expression (see \cite{Sabatier}, Theorem 7.1. and details in Appendix B). The proof is based on an assumed form for the transfer function which is not proved in the paper. A complete and detailed discussion of this result is then needed.

\item J. Sabatier \cite{sab2020} discuss distributions, i.e. relations between the parameters of the Cauer type networks or equivalently different space scale discretization of a diffusion equation with spatially dependent coefficient, which are going beyond geometric distribution (see \cite{sab2020},p.250) and leading to power law behaviour. It will be interesting to give a full characterization of the possible distributions as well as the form of the associated coefficients corresponding to the class discuss in \cite{sab2020}.
\end{itemize}

From the application point of view, it would be nice to give full characterization of the diffusion properties of some specific material like COF (Covalent Organic Framework) or MOF (Metal-Organic Framework) (see \cite{review} for a review) using the previous approach.

\ack A. Szafra\'{n}ska thanks the National Science Center for the financial support in the form of the research project No. 2021/05/X/ST1/00332 and J. Cresson thanks the GDR CNRS no. 2043, Géométrie différentielle et Mécanique and the fédération MARGAUx (FR 2045) for supports and discussion with J. Sabatier, V. Tartaglione and C. Farges at the IMS Bordeaux.

\bibliographystyle{acmurl}

\end{document}